\newtheorem{theorem}{Theorem}
\newtheorem{proposition}[theorem]{Proposition}
\newtheorem{definition}[theorem]{Definition}
\newtheorem{lemma}[theorem]{Lemma}
\newtheorem{conjecture}[theorem]{Conjecture}
\numberwithin{subcase}{case}
\numberwithin{subsubcase}{subcase}
\newcommand{\poly}{\operatorname{poly}}
\author{Brett Leroux, Luis Rademacher}
\def\keywords#1{\par\addvspace\medskipamount{\rightskip=0pt plus1cm
\def\and{\ifhmode\unskip\nobreak\fi\ $\cdot$
}\noindent{Keywords:}\enspace\ignorespaces#1\par}}
\title{Expansion of random $0/1$ polytopes}
\def\final{0}  
\newcommand{\lnote}[1]{[{\small Luis: \bf #1}]}
\newcommand{\bnote}[1]{[{\small Brett: \bf #1}]}
\newcommand{\anonnote}[1]{[{\small anon: \bf #1}]}
\newcommand{\sidecomment}[1]{\marginpar{\tiny #1}}
\newcommand{\details}[1]{{\color{blue}\ [[#1]] }}
\newcommand{\lnote}[1]{}
\newcommand{\bnote}[1]{}
\newcommand{\anonnote}[1]{}
\newcommand{\sidecomment}[1]{}
\newcommand{\details}[1]{}
\newcommand{\Rl}{\operatorname{\mathbb{R}}}
\newcommand{\Tor}{\operatorname{\Tor}}
\newcommand{\aff}{\operatorname{aff}}
\newcommand{\conv}{\operatorname{conv}}
\newcommand{\suchthat}{\mathrel{:}}
\newcommand\fake@math{}
\def\fake@math#1\){[math]}
\begin{document}
\maketitle

\begin{abstract}
    A conjecture of Mihail and Vazirani \cite{FederMihail} states that the edge expansion of the graph of every $0/1$ polytope is at least one. Any lower bound on the edge expansion gives an upper bound for the mixing time of a random walk on the graph of the polytope. Such random walks are important because they can be used to generate an element from a set of combinatorial objects uniformly at random. A weaker form of the conjecture of Mihail and Vazirani says that the edge expansion of the graph of a $0/1$ polytope in $\Rl^d$ is greater than 1 over some polynomial function of $d$. This weaker version of the conjecture would suffice for all applications. Our main result is that the edge expansion of the graph of a \emph{random} $0/1$ polytope in $\Rl^d$ is at least $\frac{1}{12d}$ with high probability. 
\end{abstract}

\section{Introduction} 

A \emph{$0/1$ polytope} (in $\Rl^d$) is the convex hull of some subset of $\{0,1\}^d$. 
In other words, $0/1$ polytopes are polytopes such that every coordinate of every vertex is either 0 or 1. 
One reason these polytopes have been studied is their connection to various combinatorial optimization problems. 
This connection arises due to the fact that many combinatorial structures can be described by a set of $0/1$ vectors. For example, if $M$ is a matroid whose ground-set has size $d$, then every basis of $M$ corresponds to a $0/1$ vector in $\{0,1\}^d$. One can then define the \emph{matroid base polytope} of $M$ as the convex hull of the $0/1$ vectors corresponding to bases of $M$. With this construction, questions about the combinatorial structure of $M$ can be restated as questions about the geometric structure of the matroid base polytope of $M$. See \cite{MR183532,MR2163945} for an early example of the use of this idea.

The applications of $0/1$ polytopes that are most relevant to this paper depend only on the graph of the polytope. For a polytope $P$, the graph $G(P)$ of $P$ is the graph whose vertices are vertices of $P$ and whose edges are edges of $P$. It turns out that by performing a random walk on the graph of a $0/1$ polytope, one can solve a number of important combinatorial optimization problems. The prime example of this is the problem of sampling from a set of combinatorial objects uniformly at random. In our setting, the set of combinatorial objects naturally corresponds to the set of vertices of some $0/1$ polytope. Thus, the problem of generating such a random sample is reduced to the problem of generating a random vertex of a $0/1$ polytope. 
This can be done efficiently as long as the random walk on the graph of the polytope mixes rapidly, i.e., approaches the stationary distribution in $\poly(d)$ steps. 
This rapid mixing can be guaranteed to occur if one can obtain a $1/\poly(d)$ lower bound on a quantity associated to the graph called the \emph{edge expansion}. This is well known, see for example \cite{DBLP:conf/random/KaibelR03,Mihail}. We explain in more detail the relationship between edge expansion and rapid mixing below. First, we define edge expansion.

For a graph $G = (V,E)$, and a subset $S \subset V$, we use $\delta(S)$ to denote the set of edges that connect a vertex in $S$ to a vertex in $V \setminus S$. With this, we can the define the edge expansion of a graph as follows

\begin{definition}
The \emph{edge expansion} of a graph $G = (V,E)$ is
\[
\min\bigg\{\frac{|\delta(S)|}{|S|} \suchthat S \subset V, 1 \le |S| \le \frac{|V|}{2}\bigg\}.
\]
\end{definition}
Similarly, the edge expansion of a polytope $P$ is defined to be the edge expansion of the graph $G(P)$ of $P$. 

The proof that a good lower bound on edge expansion implies rapid mixing is roughly as follows. A lower bound on the edge expansion implies, by the Cheeger inequality for general graphs as stated in \cite{MR1395858}, a lower bound on the spectral gap of the Laplacian of the graph. It is then a standard fact that a lower bound on the spectral gap implies rapid mixing, see for example \cite{MR1211324}.

The main motivation for this paper is the conjecture of Mihail and Vazirani which states that all $0/1$ polytopes have edge expansion at least 1. See \cite[Section 7]{FederMihail} and \cite{Mihail}. For applications, it would suffice to establish the following weaker form of Mihail and Vazirani's conjecture which has been mentioned in a number of previous works including \cite{MR2231088, MR2077562, Mihail}.

\begin{conjecture}\label{conj:poly}
The edge expansion of the graph of a $0/1$ polytope in $\Rl^d$ is greater than $\frac{1}{f(d)}$ for some polynomial function $f$. 
\end{conjecture}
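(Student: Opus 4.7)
The plan is to prove the weak Mihail--Vazirani conjecture by induction on the dimension $d$, exploiting the recursive structure of 0/1 polytopes under coordinate sections. For each $i \in [d]$, the sections $F_0^i = P \cap \{x_i = 0\}$ and $F_1^i = P \cap \{x_i = 1\}$ are faces of $P$, are themselves 0/1 polytopes in $\Rl^{d-1}$ after forgetting coordinate $i$, and their vertex sets partition $V(P)$. Since faces of faces are faces, the edges of $F_b^i$ are precisely the edges of $P$ contained in $F_b^i$, and any remaining edge of $P$ is a ``crossing'' edge with one endpoint in $F_0^i$ and one in $F_1^i$.

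Fix $S \subseteq V(P)$ with $|S| \le |V(P)|/2$ and write $S_b^i = S \cap V(F_b^i)$. For every coordinate $i$,
\[
|\delta_P(S)| \ge |\delta_{F_0^i}(S_0^i)| + |\delta_{F_1^i}(S_1^i)|.
\]
If some $i$ satisfies $|S_b^i| \le |V(F_b^i)|/2$ for both $b \in \{0,1\}$, then applying the induction hypothesis within each face yields $|\delta_P(S)| \ge |S|/f(d-1)$, which already suffices to take $f(d) = f(d-1)$. Otherwise, for every $i$ one face (say $F_0^i$, after applying the symmetry $x_i \mapsto 1 - x_i$, which preserves the 0/1 structure) contains a strict majority of $S$; induction applied to the complement $T_0^i = V(F_0^i) \setminus S_0^i$ still gives $|\delta_{F_0^i}(S_0^i)| = |\delta_{F_0^i}(T_0^i)| \ge |T_0^i|/f(d-1)$. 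The hope is that averaging these bounds over $i \in [d]$, combined with a lower bound on the crossing edges separating $S$ from its complement, recovers $|\delta_P(S)| \ge |S|/\poly(d)$.

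The main obstacle --- and the reason the weak conjecture has remained open for three decades --- is that the coordinate-section recursion provides no a priori control over crossing edges: a 0/1 polytope can have arbitrarily few edges between its two parallel coordinate facets even when both facets are large, and in the regime where every direction $i$ is ``hard,'' the complement sets $T_0^i$ can all be exponentially smaller than $|S|$, so no patching of the inductive bounds yields a polynomial expansion estimate. A proof of the full conjecture therefore seems to require either a genuinely global isoperimetric inequality tailored to vertex sets of 0/1 polytopes sitting inside $\{0,1\}^d$, or a spectral / log-concavity technique in the spirit of the recent resolution of Mihail--Vazirani for matroid base polytopes, rather than the dimension-reducing projection argument sketched above.
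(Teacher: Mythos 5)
The statement you were asked about is \cref{conj:poly}, which is an \emph{open conjecture}: the paper does not prove it, and indeed explicitly lists families of $0/1$ polytopes (knapsack polytopes, equality constrained $0/1$ polytopes, symmetric traveling salesman polytopes) for which even this weak form remains unresolved. What the paper actually proves (\cref{thm:main}) is the much weaker statement that a \emph{random} $0/1$ polytope, under three specific distributions, has edge expansion at least $1/12d$ with high probability --- evidence for the conjecture, not a proof of it. So there is no proof in the paper to compare yours against, and any complete proof you produced would be a major new result.

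Your proposal is not that result: as you yourself acknowledge in the final paragraph, the inductive coordinate-section argument does not close. The setup is fine --- $F_0^i = P \cap \{x_i = 0\}$ and $F_1^i = P \cap \{x_i = 1\}$ are faces whose vertex sets partition $V(P)$, their edges are edges of $P$, and the inequality $|\delta_P(S)| \ge |\delta_{F_0^i}(S_0^i)| + |\delta_{F_1^i}(S_1^i)|$ is correct. But the gap you identify is exactly the fatal one: when for every coordinate $i$ the set $S$ occupies a strict majority of one facet, the induction only bounds $|\delta_{F_0^i}(S_0^i)|$ by $|T_0^i|/f(d-1)$ with $|T_0^i|$ possibly exponentially smaller than $|S|$, and nothing in the recursion controls the crossing edges between the two coordinate facets of a general $0/1$ polytope (unlike the cube, where they form a perfect matching). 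Averaging over $i$ does not rescue this, because all $d$ directions can be simultaneously ``hard.'' What the paper does instead --- and only for random polytopes --- is project onto $k$ well-chosen coordinates so that the image is the full cube $C^k$ with bounded fibers, and then lift the cube's expansion back through the projection (\cref{lem:proj}); this sidesteps the crossing-edge problem entirely but relies on probabilistic properties that a worst-case $0/1$ polytope need not have. In short: your sketch is an honest account of why one natural strategy fails, but it is not a proof of the conjecture, and neither the paper nor anyone else currently has one.
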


As mentioned above, a proof of this conjecture would have important applications to the analysis of randomized algorithms for combinatorial problems. For details concerning such applications, see \cite{FederMihail,Gillmann,MR2077562, Mihail}.

A number of previous works have made some progress on the above conjecture, by establishing it for various special classes of $0/1$ polytopes. We overview such previous work in \cref{sec:prev}. As another special case, it was asked in \cite{DBLP:conf/random/KaibelR03} and \cite{Gillmann} whether \emph{random} $0/1$ polytopes have good expansion properties. Our main result gives an affirmative answer to this question. We consider three different (but similar) models of random $0/1$ polytopes which we call the \emph{balls-into-bins} model, the \emph{binomial} model and the \emph{uniform} model. 
See the next section for definitions of these models. 
We prove that the edge expansion of a random $0/1$ polytope distributed according to any of these three models is at least $1/12d$ with high probability.
In this theorem and everywhere else in the paper the phrase ``with high probability'' means ``with probability lower bounded by a function of $d$ alone that converges to 1 as $d$ goes to $\infty$.''

\begin{theorem}\label{thm:main}
Assume that $P \subset \Rl^d$ is a random $0/1$ polytope that is distributed according to either the balls-into-bins model, the binomial model, or the uniform model as defined in \cref{sec:random}. Then the edge expansion of $P$ is at least $1/12d$ with high probability.  
\end{theorem}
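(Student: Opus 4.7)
The plan is to reduce the problem to a purely combinatorial question about random induced subgraphs of the hypercube $Q_d$, and then use the edge-isoperimetric inequality for $Q_d$ combined with concentration of measure.

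\emph{Reduction to the hypercube subgraph.} I would first establish the following structural fact: if two vertices $u,v \in V$ of a $0/1$ polytope $P = \conv(V)$ differ in exactly one coordinate, then $[u,v]$ is always an edge of $P$. The reason is that any convex combination of $0/1$ vectors equal to the midpoint $(u+v)/2$ must use only $0/1$ vectors agreeing with $u$ and $v$ on the $d-1$ coordinates where $u$ and $v$ agree, but the only such vectors are $u$ and $v$ themselves. Consequently the graph $G(P)$ contains the induced hypercube subgraph $Q_d[V]$ as a spanning subgraph, and the edge expansion of $G(P)$ is at least that of $Q_d[V]$. It therefore suffices to prove that for $V$ drawn from any of the three models, $Q_d[V]$ has edge expansion at least $1/(12d)$ with high probability.

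\emph{Probabilistic analysis of $Q_d[V]$.} Standard couplings should reduce the three models to a single representative, say the binomial model in which each vertex of $\{0,1\}^d$ is placed in $V$ independently with some probability $p$. For a fixed $S \subseteq V$ with $|S| = k \le |V|/2$, Harper's edge-isoperimetric inequality gives $|\delta_{Q_d}(S)| \ge k \max(d - \log_2 k, 1)$. A hypercube edge in $\delta_{Q_d}(S)$ survives into $Q_d[V]$ exactly when its outside endpoint lies in $V$, which happens independently with probability $p$. Since each outside vertex is incident to at most $d$ outgoing edges, Bernstein's inequality yields an exponential tail bound for the event $|\delta_{Q_d[V]}(S)| < \tfrac{1}{2} p |\delta_{Q_d}(S)|$, after which the claim would follow by a union bound over $S$.

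\emph{Main obstacle.} The heart of the difficulty is that a naive union bound over $\binom{|V|}{k}$ subsets is far too loose, since the isoperimetric slack is typically only linear in $k$ whereas the number of subsets is exponential. I expect the resolution to go via one of two routes: either (a) restricting attention to a regime where $|V|$ itself is small enough (polynomially or slowly exponential in $d$) to make the union bound succeed, exploiting the fact that in the three models $|V|$ can be controlled and the boundary $|\delta_{Q_d}(S)|$ is very large for small $k$; or (b) bypassing the union bound altogether via a canonical-path / multicommodity-flow argument in $Q_d[V]$, routing a unit of flow between every ordered pair of vertices along a uniformly random ordering of the coordinates in their symmetric difference. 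A direct symmetry computation gives expected congestion per edge of order $p \cdot |V|$, and the standard congestion-to-expansion bound $\phi \ge |V|/(2C)$ then yields edge expansion of order $1/d$. The technical challenge in route (b) is that the flows on different edges are correlated through shared intermediate vertices, so establishing the congestion bound uniformly over the $d \cdot 2^d$ edges will require a careful concentration argument, probably a martingale inequality that is sensitive to the contribution of a single random intermediate vertex.
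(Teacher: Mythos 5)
Your opening structural fact is correct: if $u,v\in V$ differ in exactly one coordinate then $[u,v]$ is an edge of $\conv(V)$, so $G(P)$ contains the induced hypercube subgraph $Q_d[V]$ and edge expansion is monotone under adding edges. But the reduction is to the wrong graph, and the statement you then need --- that $Q_d[V]$ has edge expansion at least $1/(12d)$ with high probability --- is false in essentially every non-degenerate regime of the three models. In the balls-into-bins model with, say, $d < n \le 2^{d/3}$, the expected number of pairs of chosen points at Hamming distance $1$ is about $\binom{n}{2} d/2^d \to 0$, so with high probability $Q_d[V]$ has \emph{no edges at all}; its edge expansion is $0$ even though the polytope's graph is connected (indeed, all the edges of a sparse random $0/1$ polytope are ``long'' edges between vertices at Hamming distance roughly $d/2$, and these are invisible to $Q_d[V]$). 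Even in the dense binomial regime the reduction fails: a vertex $v\in V$ is isolated in $Q_d[V]$ when none of its $d$ hypercube neighbors lies in $V$, and the expected number of such isolated vertices is $p\,(2(1-p))^d$, which diverges for any constant $p<1/2$; a single isolated vertex already forces edge expansion $0$. Your route (b) inherits the same problem: canonical paths in the hypercube pass through intermediate vertices that must lie in $V$, and a path of length $\Theta(d)$ survives with probability only $p^{\Theta(d)}$.

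The missing idea is a mechanism for producing the long polytope edges, and this is where the paper diverges from your plan. The paper proves a ``projection lemma'': if the orthogonal projection $\pi_k$ onto some $k$ coordinates maps $V(P)$ \emph{onto} all of $V(C^k)$ with every fiber $\pi_k^{-1}(v)\cap V(P)$ of size at most $c$, then the edge expansion of $G(P)$ is at least $1/(2c)$. The point is that an edge of $C^k$ lifts through $\pi_k^{-1}$ to a $2$-face-like face of $P$ in which a vertex-figure argument (\cref{prop:poly}) guarantees an actual edge of $P$ crossing the cut; these lifted edges are precisely the long edges your induced subgraph discards. The probabilistic work then consists only of choosing $k$ (roughly the largest integer with $n\ge k2^k$) so that the projected points cover $C^k$ and a balls-into-bins/Chernoff bound keeps every fiber below $6d$. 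If you want to salvage your approach, you would need to replace $Q_d[V]$ by a graph that provably sits inside $G(P)$ and retains expansion after sparsification --- which is, in effect, what the projection lemma constructs.
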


See \cref{sec:proofs} for the proof of this theorem. 

A rough idea of the proof is as follows: Say we have a random $0/1$ polytope $P$ in $\Rl^d$ with $n$ vertices. It is possible to choose an integer $k$ which depends on $n,d$ such that if we consider the orthogonal projection of $P$ to the first $k$ coordinates, then the projected vertices of $P$ cover the vertices of the $k$-cube $C^k$ in the projected space and also the projected vertices of $P$ are well distributed among the vertices of $C^k$ in the sense that not too many vertices of $P$ are projected to the same vertex of $C^k$. We then use the fact that $C^k$ has good edge expansion to show that $P$ also must have good edge expansion. Apart from being interesting in their own right, these results provide some evidence that the above weaker form of the conjecture (\cref{conj:poly}) of Mihail and Vazirani may be true. 


\section{Models of randomness}\label{sec:random}

In this section we introduce the models of random $0/1$ polytopes that we consider. 

The most familiar example of a $0/1$ polytope in $\Rl^d$ is, of course, the regular $d$-dimensional cube. We use the notation $C^d: = [0,1]^d$ for the regular $d$-dimensional cube in $\Rl^d$. The vertex set of the cube $C^d$ is $\{0,1\}^d$ and so every $0/1$ polytope can be seen as the convex hull of some subset of vertices of $C^d$ for some $d$. Therefore, to generate a random $0/1$ polytope, one can first pick some random subset $S \subset \{0,1\}^d$ and then form the polytope by taking the convex hull of $S$. For a set $S \subset \{0,1\}^d$, we use $\conv S$ to denote the convex hull of $S$, i.e., the $0/1$ polytope with vertex set $S$.

We restrict our attention to the following three models of random $0/1$ polytopes.

\begin{enumerate}
    \item \textbf{The \emph{balls-into-bins} model:} For any $n \in \mathbb{N}$, choose $S_1, \dotsc, S_n$ independently and uniformly from $\{0,1\}^d$. Repetition is allowed. Define the set $S_n^d := \{S_1,\dotsc, S_n\}$ and the polytope $P_n^d  := \conv S_n^d$. 
    \item \textbf{The \emph{binomial} model:} For any $p \in (0,1)$, let $S_p^d$ be the subset of $\{0,1\}^d$ where each $v \in \{0,1\}^d$ is in $S_p^d$ with probability $p$. Define the polytope $P_p^d := \conv S_p^d$.
    \item \textbf{The \emph{uniform} model:} For any $1 \le n \le 2^d$, let $U_{n}^d$ be chosen uniformly at random from the set of all $n$-element subsets of $\{0,1\}^d$. Define the polytope $Q_{n}^d:= \conv U_{n}^d$.
\end{enumerate}

\section{Previous work on expansion of \texorpdfstring{$0/1$}{0/1} polytopes}\label{sec:prev}
Some important families of $0/1$ polytopes are known to have edge expansion at least 1. We give an overview of what is known below. We also explain what is known about a closely related expansion property called \emph{vertex expansion} (defined in \cref{sec:vert}).

\subsection{Edge expansion}
In a recent breakthrough, the authors of \cite{MR4003314} showed that the matroid base polytope of any matroid has edge expansion at least one (\cite[Theorem 1.5]{MR4003314}). That is, they established the original conjecture of Mihail and Vazirani (that all $0/1$ polytopes have edge expansion at least one) for $0/1$ polytopes which are the matroid base polytope of some matroid. 

Prior to this breakthrough, the conjecture had only been established for some more limited families of $0/1$ polytopes: Kaibel showed in \cite{MR2077562} that the conjecture holds for $0/1$ polytopes of dimension at most five, simple $0/1$ polytopes, hypersimplices, stable set polytopes, and perfect matching polytopes. In earlier papers, the conjecture had been established for matching polytopes, order ideal polytopes, and independent set polytopes in \cite{mmm}, and for balanced matroid base polytopes in \cite{FederMihail}.

Despite this progress, the conjecture of Mihail and Vazirani is still a long way from being fully solved. Indeed, most $0/1$ polytopes do not fall into any of the categories mentioned above. Some examples of $0/1$ polytopes for which the conjecture still open are knapsack polytopes, equality constrained $0/1$ polytopes \cite{DBLP:journals/dam/MatsuiT95}, and symmetric
traveling salesman polytopes. For these polytopes, the weaker form of the conjecture, (\cref{conj:poly}), is also still open.

\subsection{Vertex expansion}\label{sec:vert}
There is another notion of expansion called \emph{vertex expansion}. The vertex expansion is relevant to our considerations because the vertex expansion of a graph is a lower bound on the edge expansion of the graph. For a graph $G = (V,E)$, and a subset $S \subset V$, we use $N(S)$ to denote the set of all $v \in V \setminus S$ such that there is an edge connecting $v$ to some $s \in S$. With this, we can define vertex expansion as follows

\begin{definition}
The \emph{vertex expansion} of a graph $G = (V,E)$ is
\[
\min\bigg\{\frac{|N(S)|}{|S|} \suchthat S \subset V, 1 \le |S| \le \frac{|V|}{2}\bigg\}.
\]
\end{definition}
The vertex expansion of a polytope is the vertex expansion of the graph of the polytope. 

Because the vertex expansion gives a lower bound on the edge expansion, in the context of \cref{conj:poly}, it is natural to ask whether one can establish a $1/\poly(d)$ lower bound on the vertex expansion of $0/1$ polytopes. Unfortunately, this is known to be impossible: Gillmann showed in his thesis \cite{Gillmann} that there exists a sequence $\{P_d\}_{d \in \mathbb{N}}$ of $0/1$ polytopes $P_d$ in $\Rl^d$ such that the vertex expansion of $P_d$ is at most $2^{-.32192d}$ for $d$ sufficiently large.\footnote{A similar construction was mentioned in \cite{Mihail}, but it seems that the details were never published.}

In the construction of the polytopes $P_d$, some of the vertices are chosen deterministically and some are chosen randomly. In contrast to Gillman's result, we can show that if the vertices are chosen completely randomly, then the polytope will have $1/\poly(d)$ vertex expansion with high probability. In particular, we can prove that the vertex expansion of a random $0/1$ polytope distributed according to any of the three models described in \cref{sec:random} is $\Omega(1/d^{3/2})$ with high probability: 

\begin{theorem}\label{thm:vertex}
Assume that $P \subset \Rl^d$ is a random $0/1$ polytope that is distributed according to either the balls-into-bins model, the binomial model, or the uniform model as defined in \cref{sec:random}. 
Then the vertex expansion of $P$ is $\Omega(1/d^{3/2})$ with high probability.  
\end{theorem}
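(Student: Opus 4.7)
The plan is to follow the projection strategy sketched for Theorem~\ref{thm:main}, but invoke the vertex isoperimetric inequality for $C^k$ (Harper's theorem) in place of the edge isoperimetric inequality. Let $n = |V(P)|$ and choose $k = k(n,d)$ as in the proof of Theorem~\ref{thm:main}, so that with high probability the orthogonal projection $\pi\colon \RR^d \to \RR^k$ onto the first $k$ coordinates has every fiber $V_v := \pi^{-1}(v)\cap V(P)$ of size between $c_1 n/2^k$ and $c_2 n/2^k$. Harper's theorem then guarantees $|N_{C^k}(T)|\ge c\,|T|/\sqrt{k}$ for every $T \subset V(C^k)$ with $|T|\le 2^{k-1}$.

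For $S\subset V(P)$ with $1\le |S|\le n/2$, set $T = \pi(S)$; the upper fiber bound yields $|T|\ge |S|\cdot 2^k/(c_2 n)$, and choosing $k$ so that $2^k \ge 2n$ forces $|T|\le 2^{k-1}$. To transfer cube vertex expansion to $G(P)$, the key step is the following lifting lemma. Suppose $u\in T$ and $v\in N_{C^k}(T)$ differ only in coordinate $1$. Then $F := P\cap\{x_2 = u_2,\dotsc,x_k = u_k\}$ is a face of $P$, namely the intersection of $k-1$ supporting hyperplanes of the form $x_i = 0$ or $x_i = 1$, and its vertex set is exactly $V_u\cup V_v$. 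For any $x\in V_u\cap S$, since $F$ is contained in the slab between the facets $\{x_1 = u_1\}\cap F$ and $\{x_1 = v_1\}\cap F$ and the latter (which contains $V_v$) is nonempty, the tangent cone of $F$ at $x$ must have an extreme ray with strictly positive first-coordinate component; following this ray yields an edge of $F$---hence of $P$---from $x$ to a vertex of $V_v$. This produces a neighbor of $S$ inside $V_v$, and since distinct $v\in N_{C^k}(T)$ give disjoint fibers, we conclude
\[
|N_{G(P)}(S)| \;\ge\; |N_{C^k}(T)| \;\ge\; \frac{c\,|T|}{\sqrt{k}} \;\ge\; \frac{c'\,|S|\cdot 2^k}{n\sqrt{k}} \;=\; \Omega\!\left(\frac{|S|}{\sqrt{k}}\right) \;=\; \Omega\!\left(\frac{|S|}{\sqrt{d}}\right),
\]
which more than suffices for the $1/d^{3/2}$ bound.

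The main obstacle is reconciling the two competing demands on $k$: the coupon-collector-style requirement that $n$ dominate $2^k$, needed so that every cube vertex has the claimed two-sided fiber concentration, clashes with the demand $2^k\ge 2n$ needed to ensure $|T|\le 2^{k-1}$. The proof will therefore almost certainly split by the size of $n$. When $n = O(\poly(d))$ the polytope is a simplex with high probability and has $\Omega(1)$ vertex expansion directly. In the moderate and large $n$ regimes one takes $k$ close to $\log_2 n$ and handles potentially empty fibers either by rerouting the lifting argument through a neighboring nonempty fiber in $C^k$ or by weakening to a one-sided concentration statement combined with a version of Harper's inequality that tolerates some holes. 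The underlying probabilistic estimates controlling fiber sizes are essentially the same as those needed for Theorem~\ref{thm:main} across the balls-into-bins, binomial, and uniform models, and should carry over without substantive change.
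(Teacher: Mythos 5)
Your high-level plan---project to $k$ coordinates, apply Harper's theorem to $C^k$ in place of the cube's edge expansion, and lift cube neighbors back to neighbors in $G(P)$---is exactly the route the paper takes (its proof of this theorem is literally ``run the proof of \cref{thm:main} with the vertex isoperimetric inequality for $C^k$ substituted for its edge expansion''), and your lifting step via an extreme ray of the tangent cone at $x$ is a correct variant of the paper's argument via \cref{prop:poly}. But there is a genuine gap at the point you yourself flag as ``the main obstacle,'' and the patches you sketch (rerouting through a neighboring nonempty fiber, a Harper inequality with holes) do not close it. The two demands on $k$ really are incompatible, and neither horn survives: if you take $n \gtrsim k2^k$ so that every fiber is nonempty (which your lifting lemma genuinely needs, since it requires $V_v \neq \emptyset$), then a set $S$ with $|S| = n/2$ can easily satisfy $\pi(S) = V(C^k)$ (put one point of $S$ in every fiber), so $N_{C^k}(\pi(S)) = \emptyset$ and Harper's theorem gives you nothing; if instead you force $2^k \ge 2n$, most fibers are empty and the face $F$ in your lifting lemma may have $V_v = \emptyset$, so no edge leaves $V_u$ in the required direction.

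The missing idea is the case split inside the paper's projection lemma (\cref{lem:proj}): one does not apply the cube's isoperimetry to $\pi(S)$ but to $U := \pi(S)\setminus M$, where $M := \pi(S)\cap\pi(T)$ and $T := V(P)\setminus S$. If $|M| \ge |S|/2c$ (with $c$ the fiber-size bound, $c = O(d)$), then each fiber over $M$ contains both $S$- and $T$-points, hence by connectivity of the graph of that face an $S$--$T$ edge; distinct fibers give distinct neighbors of $S$, so $|N(S)| \ge |S|/2c = \Omega(|S|/d)$ with no isoperimetric inequality needed at all. Otherwise $|U| \ge |S|/2c$, and $U \subseteq \{x \in V(C^k) : \pi^{-1}(x)\subset S\}$; since the analogous set for $T$ is disjoint from it, at least one of the two has size at most $2^{k-1}$, and one works with whichever side satisfies this---that observation is what replaces your unachievable condition $2^k \ge 2n$. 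Every $m \in V(C^k)\setminus U$ has a $T$-point in its fiber, so Harper's theorem applied to $U$ together with your lifting step yields $|N(S)| \ge \Omega(|U|/\sqrt{k}) = \Omega(|S|/(d\sqrt{d}))$, which is the claimed $\Omega(1/d^{3/2})$ (not the $\Omega(1/\sqrt{d})$ your parameter choice would have given). Two smaller points: your claim that $P$ is a simplex with high probability whenever $n = O(\poly(d))$ is false for $n > d+1$; the small-$n$ regime is handled simply by connectivity of $G(P)$, which gives $|N(S)| \ge 1 \ge 2|S|/n$. And the two-sided fiber concentration $|V_v| \ge c_1 n/2^k$ is neither proved in the paper nor needed---nonemptiness of fibers plus the upper bound $O(d)$ suffice.
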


The proof of the above theorem is nearly the same as the proof of the corresponding result for edge expansion (i.e. \cref{thm:main}) and is thus omitted. Whereas in the proof of \cref{thm:main} we use the fact that the edge expansion of the $d$-dimensional cube $C^d$ is 1, in the proof of \cref{thm:vertex} one uses the well known fact that the vertex expansion of the $d$-dimensional cube $C^d$ is $\Omega(1/\sqrt{d})$. This fact is sometimes called Harper's theorem, see \cite{MR200192}.

\section{Background on polytopes}
Previous works which established good edge expansion for special classes of $0/1$ polytopes used mainly combinatorial proof techniques. Our approach, in contrast, is purely geometric. Thus, we need some basic facts about the geometry of convex polytopes.

As is standard, by a \emph{polytope} we always mean a \emph{convex polytope} and we sometimes omit the word convex. We refer the reader to \cite{Ziegler} for a comprehensive introduction to the theory of convex polytopes and to \cite{MR1785291} for a survey on $0/1$ polytopes in particular. 

Let $P \subset \Rl^d$ be a polytope. A \emph{face} $F$ of $P$ is any set that can be written as $F = \{x \in P \suchthat c \cdot x = c_0\}$ where $c \cdot x \le c_0$ is some linear inequality that is satisfied by all $x \in P$. A \emph{proper face} of $P$ is any face of $P$ which is not equal to either $P$ or $\emptyset$. For a polytope $P$, we use the notation $V(P)$ for the set of vertices of $P$, i.e., the set of $0$-dimensional faces of $P$ and $E(P)$ for the set of edges, i.e. the set of $1$-dimensional faces of $P$. 

Aside from these definitions, the only fact about polytopes we need is the following basic result that is often used without proof. We give a proof for the sake of completeness. 
\begin{proposition}\label{prop:poly}
If $P \subset \Rl^d$ is a $d$-polytope (i.e. $P$ is full-dimensional, so that $\aff P = \Rl^d$), then for any vertex $v$ of $P$, the set of edges incident to $v$ are not contained in any hyperplane.
\end{proposition}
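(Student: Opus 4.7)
The plan is a vertex figure argument. Since $v$ is a vertex of $P$, there is a linear functional $c \ne 0$ whose strict minimum over $P$ is attained at $v$. Fix
\[
0 < t < \min\{c \cdot u - c \cdot v \suchthat u \in V(P) \setminus \{v\}\},
\]
let $H_t := \{x \in \Rl^d \suchthat c \cdot x = c \cdot v + t\}$, and set $Q := P \cap H_t$. Suppose for contradiction that every edge at $v$ lies in some hyperplane $H$; since each edge contains $v$, we have $v \in H$. The plan is to show that every vertex of $Q$ lies on an edge of $P$ at $v$ (so that $Q \subseteq H$) and that $\dim Q = d-1$. Combined with $v \in H$ and $v \notin H_t \supseteq \aff(Q)$, this yields $\aff(\{v\} \cup Q) = \Rl^d \subseteq H$, contradicting $\dim H = d-1$.

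The first part is a routine vertex-figure computation. By the choice of $t$, every vertex of $P$ other than $v$ has $c$-value strictly greater than $c \cdot v + t$, so no edge whose endpoints are both distinct from $v$ can meet $H_t$; and each edge at $v$ crosses $H_t$ in exactly one point because $c$ is strictly increasing along it from $v$ to its other endpoint. Since any vertex of the slice $Q = P \cap H_t$ arises as the intersection of $H_t$ with an edge of $P$, this shows that the vertices of $Q$ are exactly the crossing points, one per edge at $v$, each lying on its associated edge.

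For $\dim Q = d-1$, I will use that $P$ is full-dimensional, so $\operatorname{int}(P)$ contains some ball $B(p, r)$. The point $p$ cannot satisfy $c \cdot p = c \cdot v$, for otherwise $c$ would vanish on the ball of direction vectors $B(0, r)$, forcing $c = 0$; hence $c \cdot p > c \cdot v$. Drawing segments from $v$ to the points of $B(p, r)$ yields a subset of $P$ that crosses $H_t$ transversally in a relatively open subset of $H_t$, exhibiting a $(d-1)$-dimensional piece inside $Q$.

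The step I expect to be most delicate is coordinating these two ingredients: one must verify that $t$ is small enough both to separate $v$ from the other vertices (making the vertex identification clean) and for the segments from $v$ to $B(p, r)$ to cross $H_t$ inside $P$ (making the dimension count clean). Once these are in place, the chain $\Rl^d = \aff(\{v\} \cup Q) \subseteq H$ delivers the contradiction immediately.
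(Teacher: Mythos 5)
Your proof is correct and follows essentially the same route as the paper: both arguments reduce the claim to the fact that the vertex figure $Q = P \cap H_t$ is a $(d-1)$-dimensional polytope whose vertices all lie on edges at $v$, so a hyperplane containing all those edges would contain $\aff(\{v\} \cup Q) = \Rl^d$. The only difference is that you verify these two facts about $Q$ from first principles (the ``delicate coordination'' you flag is easily handled by shrinking $r$ and $t$), whereas the paper simply cites \cite[Proposition~2.4]{Ziegler}.
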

\begin{proof}
By \cite[Proposition 2.4]{Ziegler}, the vertex figure of a $d$-polytope at any vertex $v$ is a $(d-1)$-polytope. Since the vertices of the vertex figure are precisely the intersections of the edges incident to $v$ with the hyperplane containing the vertex figure, this means that the set of edges incident to $v$ cannot be contained in a hyperplane.
\end{proof}

\section{Proofs}\label{sec:proofs}

This section is devoted to the proof of \cref{thm:main}. The idea of the proof is as follows. We first establish what we call the \say{projection lemma} (\cref{lem:proj}) which says that for a $0/1$ polytope $P \subset \Rl^d$, if there exists an orthogonal projection of $P$ to some $k$ coordinates with certain nice properties, then $P$ has good edge expansion. The nice properties that the projection $\pi$ needs to satisfy are that the image of $P$ by $\pi$ is equal to the $k$-dimensional hypercube $C^k$ and that not too many vertices of $P$ are projected to the same vertex of $C^k$. If such a projection exists, we can show that $P$ has good edge expansion by using the fact that the edge expansion of $C^k$ is one. The way this argument works is that given any partition $S \cup T$ of the vertices of $P$, we consider $\pi(S)$ and $\pi(T)$ (which are subsets of the vertices of $C^k$) and use that the edge expansion of $C^k$ is one to show that there are many edges of a certain type in $C^k$. Then, using properties of the projection, we show that all edges of this type lift through $\pi^{-1}$ (i.e. we consider the preimage of each edge by $\pi$) to edges of $P$ that connect a vertex in $S$ to a vertex in $T$. 

After we establish the above \say{projection lemma}, we show using basic probability that for our models of random $0/1$ polytopes, the projection to any $k$ coordinates has the above nice properties with high probability. Here, $k$ is some positive integer that is chosen based on the parameters of the random $0/1$ polytope in question. 

\subsection{The projection lemma}

\begin{lemma}\label{lem:proj}
Let $P \subset \Rl^d$ be a $0/1$ polytope and suppose that there exist $k$
coordinates such that the orthogonal projection $\pi_k$ to those $k$ coordinates satisfies
\begin{enumerate}
    \item $\pi_k P = C^k$. Equivalently, with $C^k$ denoting the $k$-cube in $\pi_k \Rl^d = \Rl^k$, every vertex of $C^k$ appears at least once in $\pi_kV(P)$.
    \item For every vertex $v \in V(C^k)$, the cardinality of $\pi_k^{-1}(v) \cap V(P)$ is at most $c$.
\end{enumerate}
Then the edge expansion of the graph of $P$ is at least $\frac{1}{2c}$.
\end{lemma}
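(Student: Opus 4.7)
The plan is to lift the edge expansion of the cube $C^k$ (known to equal $1$) to $P$ along $\pi_k$, paying a factor of $c$ for the bounded fibers. Fix $S \subseteq V(P)$ with $1 \le |S| \le |V(P)|/2$ and let $T = V(P) \setminus S$. I would partition $V(C^k)$ into three sets according to the colors appearing in each fiber: $A = \{w \in V(C^k) \suchthat \pi_k^{-1}(w) \cap V(P) \subseteq S\}$, $B = \{w \in V(C^k) \suchthat \pi_k^{-1}(w) \cap V(P) \subseteq T\}$, and $M = V(C^k) \setminus (A \cup B)$; note that $\pi_k(S) = A \cup M$. Condition (2) gives $|S| \le c(|A| + |M|)$ and (since $|V(P)| \le c \cdot 2^k$) $|S| \le c \cdot 2^{k-1}$; similarly $|T| \le c(|B| + |M|)$.

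The key structural input is the standard fact that for every face $\Phi$ of $C^k = \pi_k P$, the preimage $\pi_k^{-1}(\Phi) \cap P$ is a face of $P$, because any supporting inequality for $\Phi$ pulls back through $\pi_k$ to a supporting inequality for $P$. Hence each fiber $F_w := \pi_k^{-1}(w) \cap P$ is a face of $P$ and each slab $F_e := \pi_k^{-1}(e) \cap P$ over an edge $e = uv$ of $C^k$ is a face of $P$ whose vertices project exactly to $\{u, v\}$. I then harvest edges of $\delta(S)$ from two disjoint sources. First, for each $w \in M$ the face $F_w$ has vertices in both $S$ and $T$ and a connected $1$-skeleton, so it contributes at least one fiber edge to $\delta(S)$. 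Second, for each edge $e = uv$ of $C^k$ with $u \in A \cup M$ and $v \in B$, I pick an $S$-vertex $x$ of $F_e$ lying over $u$ (it exists since $u \in \pi_k(S)$) and apply \cref{prop:poly} to $F_e$: the edges of $F_e$ incident to $x$ cannot all lie in the hyperplane $\pi_k^{-1}(u) \cap \aff F_e$, so some such edge reaches a vertex of $F_e$ projecting to $v$, which lies in $T$. This produces an edge of $\delta(S)$ in $F_e$ projecting onto the full edge $e$.

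Edges harvested in the first source project to single points of $V(C^k)$, while those in the second source project to distinct edges of $C^k$, so they are pairwise distinct, giving $|\delta(S)| \ge |M| + |\delta_{C^k}(A \cup M, B)|$. To conclude I would case-split on which side of the partition $(A \cup M, B)$ of $V(C^k)$ has at most $2^{k-1}$ elements and apply the edge expansion of $C^k$ to obtain $|\delta_{C^k}(A \cup M, B)| \ge \min(|A| + |M|, |B|)$; dividing by $|S| \le c(|A| + |M|)$ in one case and by $|S| \le |T| \le c(|B| + |M|)$ in the other yields the claimed bound of $1/(2c)$ (in fact $1/c$) on $|\delta(S)|/|S|$. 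The main obstacle is the distinctness requirement in the second source: mere connectedness of $F_e$ only forces the existence of some $\delta(S)$ edge in $F_e$, which might be a fiber edge at an $M$-vertex already counted in the first source. \cref{prop:poly} is precisely the tool that promotes this cut edge to one projecting onto the full edge $e$, thereby keeping the two contributions disjoint.
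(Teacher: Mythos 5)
Your argument is correct, and it rests on exactly the same two ingredients as the paper's proof: fibers over ``mixed'' cube vertices yield cut edges by connectedness of face graphs, and slab faces over cut edges of $C^k$ yield cut edges via \cref{prop:poly}, with the edge expansion of $C^k$ supplying the count. The difference is in the decomposition and the bookkeeping. The paper splits $|\pi_k(S)| \ge s/c$ into two cases according to whether $|M| \ge s/2c$ or not, and in each case harvests edges from only \emph{one} of the two sources; that dichotomy is the origin of the factor $2$ in the stated bound $1/(2c)$. You instead partition $V(C^k)$ into $A$, $M$, $B$ and count both sources at once, noting they are distinguishable by their projections (a single vertex versus a full edge of $C^k$); adding $|M|$ to $|\delta_{C^k}(A\cup M, B)| \ge \min(|A|+|M|,\,|B|)$ and comparing against $|S| \le c(|A|+|M|)$ in one case and $|S| \le |T| \le c(|B|+|M|)$ in the other gives $|\delta(S)|/|S| \ge 1/c$, a factor-of-two improvement. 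Your choice of cube edges (from $A \cup M$ to $B$) also differs from the paper's (from $U = A$ to $V(C^k)\setminus A$); both lift correctly, but yours is the one compatible with adding the $|M|$ fiber edges without double counting, so the refinement is genuine and worth keeping. Two cosmetic remarks: the observation $|S| \le c\,2^{k-1}$ is never used, and in the degenerate case $B = \emptyset$ the cube-expansion term vanishes, but your inequality $|T| \le c(|B|+|M|)$ already gives $|M| \ge |T|/c \ge |S|/c$ there, so nothing breaks.
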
 

\begin{proof}
Let $S \subset V(P)$ with $|S| \le |V(P)|/2$ and let $T: = V(P) \setminus S$. Set $s = |S|$. We need to show that there are at least $s/2c$ edges in $P$ which connect a vertex in $S$ to a vertex in $T$. 

Observe that at least one of $\{x \in V(C^k) : \pi_k^{-1}(x) \subset S\}$ or $\{x \in V(C^k) : \pi_k^{-1}(x) \subset T\}$ has cardinality at most $2^{k-1}$. 
Assume that $\{x \in V(C^k) : \pi_k^{-1}(x) \subset S\}$ has cardinality at most $2^{k-1}$. The proof for the other case is nearly the same. 
The projection of $S$, i.e. $\pi_k(S)$, is a subset of vertices of $C^k$ and by assumption 2, $|\pi_k(S)| \ge s/c$. 
There are two cases to consider. 

\textbf{Case 1:} The cardinality of $M:=\pi_k(S) \cap \pi_k(T)$ is at least $s/2c$.

\textbf{Case 2:} The cardinality of $M$ is less than $s/2c$. 

For Case 1, for each $x \in M$, $\pi_k^{-1}(x)$ is a face of $P$ which contains points from $S$ and points from $T$. Since graphs of polytopes are connected, there exists an edge in this face going from a point in $S$ to a point in $T$. Since $|M| \ge s/2c$, we have found $s/2c$ edges in $P$ from $S$ to $T$. Each of these edges is unique because the image of each edge by $\pi_k$ is a unique vertex in $M$.  

For Case 2, let $U = \pi_k(S) \setminus M$. Since $|\pi_k(S)| \ge s/c$ and $|M| \le s/2c$, we have that $|U| \ge s/2c$. 
By assumption 1, we know that $\pi_k(S \cup T)$ contains every vertex of $C^k$. 
Finally, recall that we are assuming that $\{x \in V(C^k) : \pi_k^{-1}(x) \subset S\}$ has cardinality at most $2^{k-1}$. 
This means that $|U| \le 2^{k-1}$.
Now using the fact that the edge expansion of $C^k$ is 1, we know that there are at least $|U|\ge s/2c$ edges of $C^k$ going from a vertex in $U$ to a vertex in $V(C^k)\setminus U$. 
Let $E$ be the set of those edges.
We will show that each such edge lifts (through $\pi_k^{-1}$) to an edge of $P$ that has one point in $S$ and one point in $T$ as its endpoints. 
That is, for each edge $e \in E$, we consider the preimage $\pi_k^{-1}(e)$ and we will show that there exists some edge of $P$ that is contained in $\pi_k^{-1}(e)$ and which has one point in $S$ and one point in $T$ as its endpoints.

For each edge $e \in E$ we have $e = \conv(u,m)$ with $u \in U$ and $m \in V(C^k)\setminus U$. 
The pre-image $\pi_k^{-1}(e)$ is a face (call it $F$) of $P$ which has two proper faces $\pi_k^{-1}(u), \pi_k^{-1}(m)$.\footnote{For those unfamiliar with polytope theory, here is an explanation of why these preimages are faces and/or proper faces: We know that $F$ is a face of $P$ because if $H$ is a hyperplane supporting $e$ as a face of $C^k$, then $\pi_k^{-1}(H)$ is a hyperplane that supports $F$ as a face of $P$. A similar argument shows that $\pi_k^{-1}(u), \pi_k^{-1}(m)$ are both faces of $F$ and they are proper because they do not contain all the vertices of $F$.} 
Since $u \in U$, the face $\pi_k^{-1}(u)$ contains only points from $S$. 
Furthermore, by the way $U$ was constructed, for every $m \in V(C^k)\setminus U$, we know that $\pi_k^{-1}(m)$ contains at least one point from $T$. 
Let $t$ be a point in $T \cap \pi_k^{-1}(m)$. 
We claim that there is an edge in the face $F$ which goes from $t$ to a point $s \in \pi_k^{-1}(u)$. 
Indeed, if this were not the case, all of the edges in $F$ incident to $t$ would be contained in the face $\pi_k^{-1}(m)$. Now if we consider $F$ as a full dimensional polytope in $\aff F$, because $\pi_k^{-1}(m)$ is a proper face of $F$, it is contained in a hyperplane in $\aff F$. This implies that the vertex $t$ has the property that all edges incident to $t$ are contained in a hyperplane in $\aff F$ which is not possible by \cref{prop:poly}.
We have shown that for every edge $e \in E$, there is an edge $e'$ in $P$ which goes from a point in $S$ to a point in $T$ and also that $\pi_k(e') =e$. 
Since all of the edges $e \in E$ are unique, the fact that $\pi_k(e') =e$ for all $e \in E$ implies that all of the edges $e'$ that we construct are unique. Since $|E| \ge s/2c$ we have shown that there are at least this many edges in $P$ going from $S$ to $T$ and we are done. 
\end{proof}

\subsection{The three models of random polytopes} 

In this section we complete the proof of \cref{thm:main}. For the sake of readability, we state \cref{thm:main} separately for each of the three models of random polytopes we consider. We first prove the theorem for the balls-into-bins model $P_n^d$ (\cref{thm:n}). The proof first considers certain \say{degenerate} cases, i.e., when $n$ is either very large or very small. In these cases, it is trivial to show the conclusion of the theorem. For all other cases, we consider the projection of $P_n^d$ to the first $k$-coordinates (for certain $k$ depending on $n$ and $d$) and show that, with high probability, this projection has properties which allow us to obtain the conclusion of the theorem as a direct consequence of \cref{lem:proj}. The proof for the binomial model $P_p^d$ (\cref{thm:p}) is very similar to the one for the balls-into-bins model. Finally, for the uniform model $Q_{n}^d$, instead of redoing the proof a third time, we use a basic result from the theory of random sets to obtain the proof for the uniform model as a direct consequence of the proof for the binomial model (\cref{thm:uniform}).

\begin{theorem}[The balls-into-bins model]\label{thm:n}
Let $S_n^d$ be a set of  $n$ points chosen independently and uniformly from $\{0,1\}^d$. Then the edge expansion of the polytope $P_n^d:= \conv S_n^d$ is at least $1/12d$ with high probability. 
\end{theorem}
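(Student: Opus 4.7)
The plan is to split the argument into three ranges of $n$, applying the projection lemma (\cref{lem:proj}) only in the main range.

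\textbf{Degenerate ranges.} When $n \le 24d$, the polytope $P_n^d$ has at most $24d$ vertices, and since the graph of every polytope is connected, any $S \subset V(P_n^d)$ with $1 \le |S| \le |V(P_n^d)|/2 \le 12d$ satisfies $|\delta(S)| \ge 1$, so $|\delta(S)|/|S| \ge 1/(12d)$ deterministically. At the other extreme, when $n \ge 2d \cdot 2^d$, the coupon-collector estimate
\[
\Pr\bigl[\{0,1\}^d \not\subseteq \{S_1, \dots, S_n\}\bigr] \le 2^d(1-2^{-d})^n \le 2^d e^{-n/2^d} \le (2/e^2)^d
\]
shows that $P_n^d = C^d$ with high probability, and the cube $C^d$ has edge expansion $1$.

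\textbf{Main range.} For $24d < n < 2d \cdot 2^d$ I would take $k$ to be the smallest positive integer with $n/2^k < 2d$; then $k \in \{1, \dots, d\}$ and $\mu := n/2^k \in [d, 2d)$. For $v \in V(C^k)$, let $X_v := |\{i : \pi_k(S_i) = v\}| \sim \mathrm{Binomial}(n, 2^{-k})$; since the number of distinct vertices of $P_n^d$ that project to $v$ is at most $X_v$ and $\pi_k P_n^d = \conv \pi_k\{S_1, \dots, S_n\}$, both hypotheses of \cref{lem:proj} with $c = 6d$ follow from the event $1 \le X_v \le 6d$ for every $v \in V(C^k)$. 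For the lower event, $\Pr[X_v = 0] \le e^{-\mu} \le e^{-d}$, and a union bound over the $2^k \le 2^d$ vertices of $C^k$ bounds the failure probability by $(2/e)^d$. For the upper event, the standard Chernoff bound $\Pr[X_v \ge 6d] \le (e^\delta/(1+\delta)^{1+\delta})^\mu$ with $(1+\delta)\mu = 6d$ is at most $e^{-\alpha d}$ for some $\alpha > \ln 2$ uniformly in $\mu \in [d, 2d)$, so the union bound again yields failure probability $e^{-\Omega(d)}$. Applying \cref{lem:proj} then gives edge expansion at least $1/(12d)$.

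The main obstacle lies in the middle range. The window $\mu \in [d, 2d)$ must be chosen so that both Chernoff tails (no-empty-bin and max-load $\le 6d$) beat the $2^k \le 2^d$ factor from the union bound, with a rate depending only on $d$. At $\mu \approx d$ the binding constraint is the no-empty-bin event and at $\mu \approx 2d$ it is the max-load event; a direct calculation shows that both exponents exceed $\ln 2$ across the entire window, so the combined bound is a function of $d$ alone converging to $1$, as required by the paper's convention for ``with high probability''. The analogous proofs for the binomial and uniform models should go through with only minor adjustments: the binomial model replaces $X_v$ with a sum of $2^{d-k}$ Bernoulli$(p)$ random variables, and the uniform model is then handled by a standard conditioning argument comparing it to the binomial model.
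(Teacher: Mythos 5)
Your proof is correct and follows the same overall strategy as the paper (degenerate ranges plus the projection lemma in the middle range), but your middle-range analysis is tuned differently in a way worth noting. The paper takes $k$ to be the largest integer with $n \ge k2^k$, so the expected load per vertex of $C^k$ is $\Theta(k)$; it then cites a classical balls-into-bins max-load theorem to get load at most $6k \le 6d$, and must separately verify that $k \to \infty$ as $d \to \infty$ (which holds because $(k+1)2^{k+1} > n > d$ forces $k = \Omega(\log d)$), yielding failure probabilities that are only polynomially small in $d$. You instead choose $k$ so that the mean load $\mu = n/2^k$ lies in $[d,2d)$, which lets you run a direct Chernoff/coupon-collector computation with failure probability $e^{-\Omega(d)}$ beating the $2^k \le 2^d$ union bound; this avoids the external balls-into-bins citation, avoids the ``$k\to\infty$'' subtlety, and gives a quantitatively stronger probability bound, at the cost of having to check the exponent uniformly over $\mu \in [d,2d)$ (which you correctly observe is worst at $\mu$ near $2d$, where the exponent is still about $2.59 > \ln 2$). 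One small point you elide, which the paper also elides: hypothesis 1 of the projection lemma concerns $\pi_k V(P_n^d)$, so you are implicitly using that every sampled point of $\{0,1\}^d$ is in fact a vertex of $P_n^d$ (true because $0/1$ points are in convex position); it would be worth one sentence to say so.
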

\begin{proof}
First, if $n\le d$ then it is clear that $P_n^d$ has edge expansion at least $1/12d$ because $P_n^d$ has at most $d$ vertices. Indeed, given any subset $S\subset P_n^d$ with $|S| \le |P_n^d|/2$, the fact that graphs of polytopes are connected implies that there is at least one edge connecting a vertex in $S$ to a vertex in $P_n^d \setminus S$. Since $|S| \le d/2$, this is enough to show that the edge expansion of $P_p^d$ is at least $2/d \ge 1/12d$. 

If $n \ge d2^d$, then we claim that $P_n^d = C^d$ with high probability. Indeed, the probability that there exists some vertex of $C^d$ that is not chosen once in $S_n^d$ is less than or equal to
$$
2^d\bigg(1-\frac{1}{2^d}\bigg)^{d2^d} \le \bigg(\frac{2}{e}\bigg)^{d} 
$$
and so the probability that $P_n^d \neq C^d$ goes to zero as $d \to \infty$. The fact that $P_n^d$ has edge expansion at least $1/12d$ with high probability now follows from the fact that the edge expansion of $C^d$ is 1. 

Now assume that $d<n<d2^d$. Let $k$ be the largest integer such that $n \ge k2^k$. We will show that by considering the projection of $P_n^d$ to the first $k$ coordinates, we can apply \cref{lem:proj} to show that the edge expansion of $P_n^d$ is at least $1/12d$ with high probability. Note that since $n<d2^d$, this means that $k<d$. Let $\pi_k: V(P_n^d) \to \Rl^k$ denote the orthogonal projection of $V(P_n^d)$ to the first $k$ coordinates. We claim that two things hold with high probability:

\textbf{Claim 1:} Letting $C^k$ denote the $k$-cube in $\pi_k \Rl^d = \Rl^k$, every vertex of $C^k$ appears at least once in $\pi_kV(P_n^d)$. 

\textbf{Claim 2:} For every vertex $v \in V(C^k)$, the cardinality of $\pi_k^{-1}(v)\subset V(P_n^d)$ is at most $6d$.

To prove that the first claim holds with high probability, note that it suffices to prove that every vertex of $C^k$ appears at least once in $\pi_kS_n^d$ with high probability. Observe that $\pi_kS_n^d$ is the same as $S_n^k$. Therefore, the first claim is equivalent to the statement that every vertex of $C^k$ appears at least once in $S_n^k$. Since $n \ge k2^k$, as argued above, we have that the probability that there exists some vertex of $C^k$ that is not chosen once in $S_n^k$ is less than or equal to $(\frac{2}{e})^{k}$. Since we are assuming in this case that $(k+1)2^{k+1} >n >d$, we have that $k \to \infty$ as $d\to \infty$ and therefore the probability that there exists some vertex of $C^k$ that is not chosen once in $S_n^k$  goes to zero as $d \to \infty$. And so we have that that Claim 1 holds with high probability. 

For the second claim, we use the well-known analysis of the classic \say{balls-into-bins} problem from probability theory, see for example \cite{balls}. In our application, that balls are the points in $S_n^d$ and the bins are the vertices of $C^k$. That is, we have $n$ balls each of which is placed into one of $2^k$ bins uniformly at random. Using the fact that $k2^k \le n \le (k+1)2^{k+1}$, by \cite[Theorem 1]{balls}, each bin contains at most $6k$ balls with high probability. Since $k<d$, each bin contains at most $6d$ balls with high probability. In other words, Claim 2 holds with high probability. 

Because Claims 1 and 2 hold with high probability, by \cref{lem:proj} we have that the edge expansion of the graph of $P_n^d$ is at least $1/12d$ with high probability. 
\end{proof}

\begin{theorem}[The binomial model]\label{thm:p}
For any $p \in (0,1)$, let $S_p^d$ be the subset of $\{0,1\}^d$ where each $x \in \{0,1\}^d$ is in $S_p^d$ with probability $p$. Then the edge expansion of the polytope $P_p^d:= \conv S_p^d$ is at least $1/12d$ with high probability.
\end{theorem}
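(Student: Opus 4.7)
The plan is to imitate the proof of \cref{thm:n}, splitting into two regimes. In the \emph{small-$p$ regime}, where $p\,2^d\le 12d$, I would use a Chernoff bound on $|S_p^d|\sim\mathrm{Bin}(2^d,p)$ to deduce $|V(P_p^d)|\le|S_p^d|\le 24d$ with probability $1-e^{-\Omega(d)}$; then the connectedness of the polytope graph gives $|\delta(S)|\ge 1$ for every nonempty proper subset $S\subset V(P_p^d)$, yielding edge expansion at least $2/|V(P_p^d)|\ge 1/(12d)$. This handles the degenerate case where the polytope is too small for a projection argument to be meaningful.

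For the complementary regime $p\,2^d>12d$, I would pick $k$ to be the largest positive integer with $p\,2^{d-k}\ge k$. This gives $1\le k<d$ and $\mu:=p\,2^{d-k}\in[k,2(k+1))$, and the inequality $(k+1)2^{k+1}>p\,2^d>12d$ forces $k\to\infty$ as $d\to\infty$. Letting $\pi_k$ denote the projection onto the first $k$ coordinates, I would verify the two hypotheses of \cref{lem:proj} with $c=6d$. Hypothesis~(1) follows from a union bound: the probability that some $v\in V(C^k)$ has no preimage in $S_p^d$ is at most $2^k(1-p)^{2^{d-k}}\le 2^k e^{-\mu}\le(2/e)^k\to 0$. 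For hypothesis~(2), the fiber counts $X_v:=|\pi_k^{-1}(v)\cap S_p^d|$ are independent $\mathrm{Bin}(2^{d-k},p)$ random variables (the fibers of $\pi_k$ partition $\{0,1\}^d$ and inclusions into $S_p^d$ are independent across $\{0,1\}^d$), and the goal is to show $\max_v X_v\le 6d$ with high probability.

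The main obstacle is the concentration in hypothesis~(2): the naive Chernoff bound $\Pr[X_v\ge 6k]\le(e\mu/(6k))^{6k}\le(e/3)^{6k}$ is not strong enough to survive a union bound over the $2^k$ fibers, since $2^k\cdot(e/3)^{6k}$ does not tend to zero. My plan is to use the sharper binomial tail $\Pr[X_v\ge t]\le(e\mu/t)^t e^{-\mu}$ (equivalently, the exact Chernoff--KL estimate), which at $t=6k$ gives a per-fiber probability of order $(e/3)^{6k}e^{-k}$, comfortably beating the $2^k$ bins. An alternative is to condition on $|S_p^d|=N$, so that $S_p^d$ is uniformly distributed over $N$-element subsets of $\{0,1\}^d$, and invoke the balls-into-bins max-load bound of \cite{balls} used in the proof of \cref{thm:n}. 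Either route yields $\max_v X_v\le 6k\le 6d$ with high probability, and then \cref{lem:proj} completes the proof.
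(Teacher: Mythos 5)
Your proposal is correct and follows essentially the same route as the paper: the same degenerate small-$p$ case via a Chernoff bound on $|S_p^d|$, the same choice of $k$ (largest with $p2^d\ge k2^k$), the same union bound for covering $V(C^k)$, and an application of \cref{lem:proj} with $c=6d$. The ``sharper'' tail bound $\Pr[X_v\ge t]\le(e\mu/t)^t e^{-\mu}$ you invoke for the fiber sizes is exactly the multiplicative Chernoff bound the paper uses in the form $\Pr[X_v\ge 3\mu]\le(e^2/27)^\mu\le(e^2/27)^k$, which likewise survives the union bound over the $2^k$ fibers.
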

\begin{proof}
First, if $p \le d/2^d$, then it is clear that $P_p^d$ has edge expansion at least $1/12d$ with high probability because it has few vertices with high probability. Indeed, the cardinality of $S_p^d$ is a binomial random variable with number of trials $2^{d}$ and probability of success $p$ and so it has expected value $\mu:=p2^d$. Using the Chernoff bound, we have that $|S_p^d|$ is less than $3\mu = 3p2^d \le3d$ with high probability. This means that $P_p^d$ has at most $3d$ vertices with high probability. Given any subset $S\subset P_p^d$ with $|S| \le |P_p^d|/2$, the fact that graphs of polytopes are connected implies that there is at least one edge connecting a vertex in $S$ to a vertex in $P_p^d \setminus S$. Since $|S| \le 3d/2$, this is enough to show that the edge expansion of $P_p^d$ is at least $2/3d \ge 1/12d$. 

So now assume that $p > d/2^d$. 

Let $k$ be the largest integer such that $p2^d \ge k2^k$. We will show that by considering the projection of $P_p^d$ to the first $k$ coordinates, we can apply \cref{lem:proj} to show that the edge expansion of $P_p^d$ is at least $1/12d$ with high probability. Note that since $p<1$, this means that $k<d$. Let $\pi_k: V(P_p^d) \to \Rl^k$ denote the orthogonal projection of $V(P_p^d)$ to the first $k$ coordinates. We claim that two things hold with high probability:

\textbf{Claim 1:} Every vertex of $C^k$ appears at least once in $\pi_kV(P_p^d)$. 

\textbf{Claim 2:} For every vertex $v \in V(C^k)$, the cardinality of $\pi_k^{-1}(v)\subset P_p^d$ is at most $6d$.

To prove the first claim holds with high probability, observe that for each $v \in V(C^k)$, the set $\pi_k^{-1}(v)$ consists of $2^{d-k}$ vertices of $C^d$. Therefore, the probability that there is some vertex in $C^k$ that doesn't appear in $\pi_kV(P_p^d)$ is equal to $2^k(1-p)^{2^{d-k}}$. Now using the fact that $p2^d \ge k2^k$, we have that $1-p \le 1- \frac{k2^k}{2^d}$. Therefore, the previously mentioned probability is at most $2^k(1-\frac{k}{2^{d-k}})^{2^{d-k}}$. This quantity is less than $(2/e)^k$. Since $k$ is the largest integer such that $p2^d \ge k2^k$, we know that $p2^d \le (k+1)2^{k+1}$. Substituting $p>d/2^d$ into the previous inequality yields $d< (k+1)2^{k+1}$ and so $k \to \infty$ as $d \to \infty$. Now since the probability that there is some vertex that doesn't appear in $\pi_pV(P_n^d)$ is less than $(2/e)^k$, we have that this probability goes to zero as $d \to \infty$ and so Claim 1 holds with high probability.

For the second claim, observe that for each $v \in V(C^k)$, $|\pi_k^{-1}(v)|$ is a binomial random variable with number of trials $2^{d-k}$ and probability of success $p$. This means that the expected value $\mu$ of each of these random variables is $p2^{d-k}$. Now by the fact that $k2^k \le p2^d \le (k+1)2^{k+1}$, we have that $k \le \mu \le 2(k+1)$. This means that $3\mu \le 6(k+1)$. Therefore, using the Chernoff bound, we have
\begin{equation}
\begin{split}
\mathbb{P}(|\pi_k^{-1}(v)| \ge 6(k+1)) &  \le  \mathbb{P}(|\pi_k^{-1}(v)| \ge 3\mu)\\
& \le \bigg(\frac{e^2}{3^3}\bigg)^\mu \\
& \le \bigg(\frac{e^2}{3^3}\bigg)^k.
\end{split}
\end{equation}

This means that the probability that there is some $v \in V(C^k)$ such that $|\pi_k^{-1}(v)| \ge 6(k+1)$ is at most $\bigl(\frac{2e^2}{3^3} \bigr)^k$ which goes to zero as $k \to \infty$. Therefore, with high probability,  for every vertex $v \in V(C^k)$, the cardinality of $\pi_k^{-1}(v)$ is at most $6k+5 < 6d$.

Because Claims 1 and 2 hold with high probability, by \cref{lem:proj} we have that the edge expansion of the graph of $P_p^d$ is at least $1/12d$ with high probability. 
\end{proof}

\begin{theorem}[The uniform model]\label{thm:uniform}
Let $U_{n}^d$ be chosen uniformly at random from the set of all $n$-element subsets of $\{0,1\}^d$. Then the edge expansion of the polytope $Q_{n}^d:= \conv U_{n}^d$ is at least $1/12d$ with high probability. 
\end{theorem}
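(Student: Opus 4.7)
The plan is to reduce the uniform model to the binomial model of \cref{thm:p} by exploiting the monotonicity of the two hypotheses of the projection lemma (\cref{lem:proj}). First, handle the trivial small-$n$ case: if $n \le 12d$, then $Q_n^d$ has at most $12d$ vertices, its graph is connected, and so for any $S \subset V(Q_n^d)$ with $|S| \le n/2$ we have $|\delta(S)|/|S| \ge 2/n \ge 1/(6d) \ge 1/(12d)$.

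For $n > 12d$, set $p := n/2^d$ and $p_\pm := (n \pm \Delta)/2^d$ with $\Delta := \sqrt{n}\,\log d$. Use the standard permutation coupling: order $\{0,1\}^d$ uniformly at random as $X_1, \ldots, X_{2^d}$, draw independent Binomial$(2^d, p_\pm)$ random variables $B_\pm$, and set $S_{p_\pm}^d := \{X_1,\ldots,X_{B_\pm}\}$ and $U_n^d := \{X_1,\ldots,X_n\}$. Under this coupling $S_{p_-}^d \subseteq U_n^d \subseteq S_{p_+}^d$ whenever $B_- \le n \le B_+$, which holds with high probability by Chernoff since $\Delta \gg \sqrt{n}$. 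Let $k$ be the largest integer with $k 2^k \le n$ (as in the proof of \cref{thm:p}) and let $\pi_k$ denote projection to the first $k$ coordinates. Since $p_\pm = p(1+o(1))$, the binomial bounds $(2/e)^k$ for covering failure and $(2e^2/27)^k$ for overfull fibers remain valid for parameters $p_\pm$ using the same $k$. Thus with high probability (i) $\pi_k(S_{p_-}^d)$ covers $V(C^k)$ and (ii) every fiber $\pi_k^{-1}(v) \cap S_{p_+}^d$ has cardinality at most $6d$. Event (i) is monotone increasing in the underlying point set and event (ii) is monotone decreasing, so the coupling transfers both to $U_n^d$: with high probability, $\pi_k(U_n^d)$ covers $V(C^k)$ and every fiber $\pi_k^{-1}(v) \cap U_n^d$ has size at most $6d$.

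It remains to lift these two facts from the generating set $U_n^d$ to the vertex set $V(Q_n^d)$. For hypothesis (1) of \cref{lem:proj}, if $\pi_k(U_n^d)$ covers $V(C^k)$ then for each $v \in V(C^k)$ the face $\pi_k^{-1}(v) \cap Q_n^d$ is nonempty, hence contains a vertex of $Q_n^d$ which projects to $v$. For hypothesis (2), $V(Q_n^d) \subseteq U_n^d$ gives $|\pi_k^{-1}(v) \cap V(Q_n^d)| \le |\pi_k^{-1}(v) \cap U_n^d| \le 6d$. Applying \cref{lem:proj} with $c = 6d$ then yields edge expansion at least $1/(12d)$. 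The main technical obstacle is verifying that the bounds in the binomial proof are robust to the $o(p)$ perturbation from $p$ to $p_\pm$ while keeping the same $k$; this is routine because those bounds depend on $p$ and $k$ continuously, and any boundary issues (including $n$ very close to $2^d$, where $p_+$ must be truncated at $1$) can be absorbed by slightly loosening the definition of $k$.
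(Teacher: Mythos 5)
Your proof is correct, but it takes a more self-contained route than the paper. The paper's own proof is a two-line reduction: it observes that the conjunction of Claims 1 and 2 from the proof of \cref{thm:p} is a \emph{convex property} of subsets of $\{0,1\}^d$ (the intersection of an increasing event with a decreasing event) and then invokes a general transfer principle (Proposition 1.15 of Janson--{\L}uczak--Ruci\'nski) to carry the high-probability statement from the binomial model to the uniform model. What you have done is essentially re-derive that black box from scratch: your permutation coupling with the sandwich $S_{p_-}^d \subseteq U_n^d \subseteq S_{p_+}^d$, combined with the observation that covering is monotone increasing and the fiber bound is monotone decreasing, is exactly the standard proof of that transfer principle. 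The trade-off is that your route forces you to re-verify the binomial estimates at the perturbed parameters $p_\pm$ with the same $k$, which you flag but do not carry out; it does go through, since $\Delta = \sqrt{n}\log d$ satisfies $\Delta/2^k \le \sqrt{2(k+1)/2^k}\,\log d \to 0$, so the fiber means $p_\pm 2^{d-k} = (n\pm\Delta)/2^k$ shift only by $o(1)$ from the unperturbed values, and even the truncated case $p_+=1$ is harmless because $2^{d-k} < 2(k+1) \le 6d$ by maximality of $k$. Two small merits of your write-up over the paper's: you handle the degenerate small-$n$ case explicitly (the paper's uniform-model proof silently inherits the case analysis of \cref{thm:p} through the citation), and you make explicit the step of passing from the random generating set $U_n^d$ to the vertex set $V(Q_n^d)$, which the paper glosses over. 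In short: correct, logically equivalent to the paper's argument, but elementary and self-contained where the paper cites a reference.
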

\begin{proof}
For this proof, we will make use of the fact that the uniform model is in some sense very similar to the binomial model. Recall that in the proof of \cref{thm:p} we showed that with $\pi_k$ denoting the orthogonal projection to the first $k$ coordinates, two claims hold with high probability: 

\textbf{Claim 1:} Every vertex of $C^k$ appears at least once in $\pi_kV(P_p^d)$. 

\textbf{Claim 2:} For every vertex $v \in V(C^k)$, the cardinality of $\pi_k^{-1}(v)\subset P_p^d$ is at most $6d$.

Now it is easy to see that, considering $V(P_p^d)$ as a subset of $\{0,1\}^d$, satisfying Claim 1 and Claim 2 is a \emph{convex property} of subsets of $\{0,1\}^d$ as defined in \cite[Section 1.3]{MR1782847}. Therefore, by \cite[Proposition 1.15]{MR1782847}, Claim 1 and 2 hold with high probability if we replace $P_p^d$ by $Q_{n}^d$ in the statements of these claims. Therefore, again using \cref{lem:proj}, we have that the edge expansion of $Q_{n}^d$ is at least $1/12d$ with high probability. 
\end{proof}

\paragraph{Acknowledgments.}
This material is based upon work supported by the National Science Foundation under Grants CCF-1657939 and CCF-2006994.

\bibliographystyle{abbrv}
\bibliography{bib}

\end{document}